\theoremstyle{plain} 
\newtheorem{theorem}{Theorem}
\newtheorem{corollary}[theorem]{Corollary}
\theoremstyle{definition} 
\newtheorem{remark}[theorem]{Remark}
\newcommand{\R}{\ensuremath{\mathbb{R}}}
\newcommand{\T}{\ensuremath{\mathbb{H}}}
\newcommand{\N}{\ensuremath{\mathbb{N}}}
\newcommand{\C}{\ensuremath{\mathbb{C}}}
\newcommand{\I}{\ensuremath{\mathbb{I}}}
\numberwithin{equation}{section}
\numberwithin{theorem}{section}
\begin{document}

\title[best Ulam stability constant]{Best constant for Ulam stability of first-order $h$-difference equations with periodic coefficient}
\author[Anderson]{Douglas R. Anderson} 
\address{Department of Mathematics \\
         Concordia College \\
         Moorhead, MN 56562 USA}
\email{andersod@cord.edu}
\author[Onitsuka]{Masakazu Onitsuka}
\address{Okayama University of Science \\
Department of Applied Mathematics \\
Okayama, 700-0005, Japan}
\email{onitsuka@xmath.ous.ac.jp}
\author[Rassias]{John Michael Rassias}
\address{National and Kapodistrian University of Athens \\
Department of Mathematics \& Informatics \\
Attikis 15342, GREECE}
\email{jrassias@primedu.uoa.gr}

\keywords{stability; periodic; best constant; Hyers--Ulam--Rassias; difference equations; variation of constants.}
\subjclass[2010]{39A10, 34N05, 39A23, 39A45}

\begin{abstract} 
We establish the best (minimum) constant for Ulam stability of first-order linear $h$-difference equations with a periodic coefficient. First, we show Ulam stability and find the Ulam stability constant for a first-order linear equation with a period-two coefficient, and give several examples. In the last section we prove Ulam stability for a periodic coefficient function of arbitrary finite period. Results on the associated first-order perturbed linear equation with periodic coefficient are also included.
\end{abstract}

\maketitle\thispagestyle{empty}


\section{introduction}

Ulam \cite{ulam} introduced a new question of stability, partially answered by Hyers \cite{hyers} and extended by Rassias \cite{rassias}.
In this way Ulam stability, also known as Hyers--Ulam stability or Hyers--Ulam--Rassias stability, has developed in the context of operators, functional equations, differential equations, and difference equations (recurrences); see  Brillou\"{e}t--Belluot, {Brzd\k{e}k}, and Ciepli\'{n}ski \cite{brillouet} for a good broad overview of the literature on this topic, or more recently {Brzd\k{e}k}, Popa, Ra\c{s}a and Xu \cite{brzdek1}. Particular to Ulam stability in the discrete setting, Popa \cite{popa,popa2} had some of the earlier papers, and more recently 
Andr\'{a}s and M\'{e}sz\'{a}ros \cite{andras},
{Brzd\k{e}k} and W\'{o}jcik \cite{brzdek2}, 
Hua, Li and Feng \cite{hua}, 
Jung and Nam \cite{jung}, Nam \cite{nam,nam2,nam3}, Shen \cite{shen}, Rasouli, Abbaszadeh, and Eshaghi \cite{rasouli}, and the present authors \cite{andon,andon2}, have considered recurrences, difference equations, or dynamic equations on time scales in relation to Ulam stability, respectively.

Very little work has been done in the area of Ulam stability and discrete ($h$-difference) equations with periodic coefficients. In what follows we will define what Ulam stability (US) is in the context of first-order $h$-difference equations with a periodic coefficient, and establish parameter values in terms of the periodic coefficient and the constant step size $h>0$ for which the equations exhibit Ulam stability. In the case of Ulam stability, best constants will be found in the sense of the minimum constant needed for an approximate solution (perturbation) to track close to a specific solution of the said equation. We will begin with the easier case of a period-two coefficient, followed by the complete explanation of the general period-$n$ coefficient case. These results set the stage for researchers exploring second and higher order discrete $h$-difference equations with periodicity in the coefficients. 


\section{best constant for first-order equations with two-cycle coefficient}\label{Section2}

Let $h>0$, and define the uniformly discrete set $\T:=\{0,h,2h,3h,\ldots\}$. 

In this section we consider on $\T$ the Ulam stability of the first-order linear homogeneous difference equation with two-cycle (period-two) coefficient
\begin{equation}\label{maineq}
 \Delta_hx(t) - p(t) x(t) = 0, \qquad \Delta_h x(t):=\frac{x(t+h)-x(t)}{h}, 
\end{equation}
where $p:\T\rightarrow\C$ is given by
\begin{equation}\label{two-cycle}
 p(t):=\begin{cases} p_0:& \frac{t}{h}\equiv 0\mod 2, \\ p_1:& \frac{t}{h}\equiv 1\mod 2 \end{cases}
\end{equation}
for $p_0,p_1\in\C\backslash\{\frac{-1}{h}\}$ with $p_0\ne p_1$.
This equation \eqref{maineq} has Ulam stability if and only if there exists a constant $K>0$ with the following property:
\begin{quote}
For arbitrary $\varepsilon>0$, if a function $\phi:\T\rightarrow\C$ satisfies $|\Delta_h\phi(t)-p(t)\phi(t)|\le\varepsilon$ for all $t\in\T$, then there exists a solution $x:\T\rightarrow\C$ of \eqref{maineq} such that $|\phi(t)-x(t)|\le K\varepsilon$ for all $t\in\T$. 
\end{quote}
Such a constant $K$ is called an Ulam stability constant for \eqref{maineq} on $\T$.

The results in this section may be viewed as a discrete version of the results by Fukutaka and Onitsuka \cite{onitsuka1} given for first-order homogeneous linear differential equations with a periodic coefficient, by using a different approach to the proofs and by allowing the periodic coefficient function $p$ in \eqref{maineq} to take non-real (complex) values.


\begin{remark}\label{remark2.1}
Set
\begin{equation}\label{epdef}
 e_p(t):=\prod_{j=0}^{\frac{t-h}{h}}\left(1+hp(jh)\right), \quad\text{where}\quad \prod_{j=0}^{-1}f(j)\equiv 1.
\end{equation}
It is straightforward to check that $e_p$ satisfies \eqref{maineq}, and $e_p(0)=1$.
Let $p_0,p_1\in\C\backslash\{\frac{-1}{h}\}$ with $p_0\ne p_1$. If $|1+hp_0||1+hp_1|=1$, then \eqref{maineq} is not Ulam stable. To see this, let arbitrary $\varepsilon>0$ be given. For $e_p$ given above in \eqref{epdef}, let $\phi$ be defined by
\[ \phi(t):=\varepsilon \ell te_p(t), \qquad t\in\T, \]
where $\ell:=\min\left\{\frac{1}{|1+hp_0|}, 1\right\}$.
Then $\phi$ satisfies the inequality
\[ |\Delta_h\phi(t)-p(t)\phi(t)| = \varepsilon \ell |e_p(t+h)| = \varepsilon \ell\begin{cases} |1+hp_0| &: \frac{t}{h}\equiv 0\mod 2 \\ 1 &: \frac{t}{h}\equiv 1\mod 2 \end{cases}  \le \varepsilon \]
for all $t\in\T$. Since $x(t)=ce_p(t)$ is the general solution of \eqref{maineq}, then
\[ |\phi(t)-x(t)| = \left|\varepsilon\ell t-c\right||e_p(t)|\rightarrow\infty \]
as $t\rightarrow\infty$ for $t\in\T$ and for any $c\in\C$, since $e_p$ is bounded and bounded away from zero; see \cite[Theorem 3.10 (ii)]{andon}. In this case, \eqref{maineq} lacks Ulam stability on $\T$. \hfill$\diamondsuit$
\end{remark}


\begin{theorem}\label{thm22}
Assume $p_0,p_1\in\C\backslash\{\frac{-1}{h}\}$ with $p_0\ne p_1$ and $0 < |1+hp_0||1+hp_1|\ne 1$. 
Let $\varepsilon>0$ be a fixed arbitrary constant, and let the function $\phi:\T\rightarrow\C$ satisfy the inequality
\[ |\Delta_h\phi(t)-p(t)\phi(t)|\le\varepsilon, \qquad t\in\T. \]
Then one of the following holds, where $e_p$ is given in \eqref{epdef}.
\begin{enumerate}
\item If $|1+hp_0||1+hp_1|>1$, then $\displaystyle\lim_{t\rightarrow\infty} \frac{\phi(t)}{e_p(t)}$ exists, and the function $x$ given by 
      \[ x(t):=\left(\lim_{t\rightarrow\infty} \frac{\phi(t)}{e_p(t)}\right)e_p(t) \] 
      is the unique solution of \eqref{maineq} with 
			$$ |\phi(t)-x(t)| \le K_1\varepsilon $$ 
			for all $t\in\T$, where
			\begin{equation}\label{Kmax}
          K_1:=h\max\left\{\frac{1+|1+hp_0|}{-1+|1+hp_0||1+hp_1|}, \frac{1+|1+hp_1|}{-1+|1+hp_0||1+hp_1|}\right\}
      \end{equation}
			is the minimum Ulam stability constant for \eqref{maineq}. 
\item If $0<|1+hp_0||1+hp_1|<1$, then any solution $x$ of \eqref{maineq} with 
      $$|\phi(0)-x(0)| < \varepsilon h\left(\frac{1+|1+hp_1|}{1-|1+hp_0||1+hp_1|}\right)$$ satisfies 
      $$ |\phi(t)-x(t)| < \varepsilon h \max\left\{\frac{1+|1+hp_0|}{1-|1+hp_0||1+hp_1|}, \frac{1+|1+hp_1|}{1-|1+hp_0||1+hp_1|}\right\} $$ 
      for all $t\in\T$. 
\end{enumerate}
\end{theorem}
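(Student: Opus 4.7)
The plan is to decouple the perturbed inequality from the growth of the exponential $e_p$ by working with the ratio $z(t):=\phi(t)/e_p(t)$. Writing $\alpha:=1+hp_0$, $\beta:=1+hp_1$, $r:=|\alpha||\beta|$, one checks using $e_p(t+h)=(1+hp(t))e_p(t)$ that
\[
\Delta_h\phi(t)-p(t)\phi(t)=e_p(t+h)\,\Delta_h z(t),
\]
so the hypothesis becomes the clean estimate $|\Delta_h z(t)|\le\varepsilon/|e_p(t+h)|$. The period-two structure gives $|e_p((2m)h)|=r^m$ and $|e_p((2m+1)h)|=|\alpha|r^m$, so bounds on partial and tail sums of $1/|e_p((j+1)h)|$ reduce to two interlaced geometric series.

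For part (i) with $r>1$, the tail series converges, $z(t)$ is Cauchy with limit $L$, and $x(t):=L\,e_p(t)$ is the unique solution of \eqref{maineq} keeping $\phi-x$ bounded. Splitting on the parity of $t/h$, summing the tails explicitly, and multiplying through by $|e_p(t)|$ delivers $|\phi(t)-x(t)|\le K_1\varepsilon$, where the even case contributes $h(1+|\beta|)/(r-1)$ and the odd case contributes $h(1+|\alpha|)/(r-1)$ to the $\max$ in \eqref{Kmax}. To show that $K_1$ is \emph{minimum}, I would build an extremal $\phi$: pick $\epsilon(t):=\Delta_h\phi(t)-p(t)\phi(t)$ of modulus $\varepsilon$ with the same complex phase as $e_p(t+h)$, so that every summand in the telescoping representation of $z(\infty)-z(t)$ aligns and the triangle inequalities become equalities. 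Because for $r>1$ the unique solution leaving $\phi-x$ bounded is $x=L\,e_p$, evaluating $|\phi-x|$ at $t=0$ (if the larger term in \eqref{Kmax} is $1+|\beta|$) or at $t=h$ (if it is $1+|\alpha|$) saturates $K_1\varepsilon$, precluding any smaller Ulam constant.

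For part (ii) with $0<r<1$ the tail sum diverges, so I would instead iterate the linear recurrence forward. Setting $\psi:=\phi-x$ for an arbitrary solution $x$, we have $\psi(t+h)=(1+hp(t))\psi(t)+h\epsilon(t)$ with $|\epsilon(t)|\le\varepsilon$; a telescoping sum together with two finite interlaced geometric series in $r$ yields
\[
|\psi(2Mh)|\le \frac{h\varepsilon(1+|\beta|)}{1-r}+r^M\left[|\psi(0)|-\frac{h\varepsilon(1+|\beta|)}{1-r}\right],
\]
and analogously at $t=(2M+1)h$ with leading constant $h\varepsilon(1+|\alpha|)/(1-r)$ and coefficient $|\alpha|r^M$ in front of the same bracket. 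Under the hypothesis on $|\psi(0)|$ the bracket is negative, so since $0<r<1$ each estimate is strictly bounded by its respective leading constant, and taking the maximum gives the stated inequality. The main technical obstacle is the optimality claim in (i): the upper bound itself is routine summation, but verifying that $K_1$ cannot be improved requires the phase-alignment construction above together with a careful check that the extremal $\phi$ actually satisfies the perturbed inequality and saturates the bound precisely at $t=0$ or $t=h$.
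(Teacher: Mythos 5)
Your proposal is correct and follows essentially the same route as the paper: your substitution $z=\phi/e_p$ with $\Delta_h\phi-p\phi=e_p(\cdot+h)\,\Delta_h z$ is exactly the variation-of-constants representation the paper writes out, the tail and partial geometric-series computations split by the parity of $t/h$ are identical (including the constants $h(1+|\beta|)/(r-1)$ at even points and $h(1+|\alpha|)/(r-1)$ at odd points, and the bracketed contraction estimate in part (ii)), and uniqueness in (i) via $|e_p(t)|\,|c_1-c_2|\rightarrow\infty$ is the paper's argument verbatim. The one place you genuinely diverge is the optimality step in (i). The paper takes the constant perturbation $q(t)\equiv\varepsilon$ and asserts $\bigl|\sum_{k\ge t/h}1/e_p(hk+h)\bigr|=\sum_{k\ge t/h}1/|e_p(hk+h)|$, which is valid when $1+hp_0$ and $1+hp_1$ are positive reals but not for general complex coefficients, since the summands then carry varying arguments and the triangle inequality is strict. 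Your phase-aligned choice $q(t)=\varepsilon\,e_p(t+h)/|e_p(t+h)|$ makes every summand a positive real, so the bound is saturated for all $t$ simultaneously and the supremum $K_1\varepsilon$ is attained at $t=0$ or $t=h$ according to which term realizes the maximum in \eqref{Kmax}; combined with the uniqueness of the tracking solution this rules out any smaller Ulam constant. This is the construction actually required for the theorem as stated over $\C$, and it is the only point at which your argument improves on, rather than reproduces, the paper's proof.
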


\begin{proof}
Assume $p_0,p_1\in\C\backslash\{\frac{-1}{h}\}$ with $p_0\ne p_1$ and $0 < |1+hp_0||1+hp_1|\ne 1$. Throughout this proof, as $|\Delta_h\phi(t)-p(t)\phi(t)|\le\varepsilon$ for all $t\in\T$, there exists a function $q:\T\rightarrow\C$ such that
\begin{equation}\label{phiq}
 \Delta_h\phi(t)-p(t)\phi(t)=q(t), \quad |q(t)|\le \varepsilon
\end{equation}
for all $t\in\T$. 

(i): First we consider the case $|1+hp_0||1+hp_1|>1$. The variation of constants formula then yields
\begin{equation}\label{voc}
 \phi(t)=\phi(0)e_p(t) + e_p(t)\sum_{k=0}^{\frac{t-h}{h}}\frac{hq(hk)}{e_p(hk+h)}, \quad\text{with standard assumption}\quad \sum_{k=0}^{-1}f(k) \equiv 0. 
\end{equation}
This $\phi$ can be rewritten as
\begin{equation}\label{vocalt}
 \phi(t)=\left[\phi(0)+\sum_{k=0}^{\infty}\frac{hq(hk)}{e_p(hk+h)}\right]e_p(t) - e_p(t)\sum_{k=\frac{t}{h}}^{\infty}\frac{hq(hk)}{e_p(hk+h)}, 
\end{equation}
where 
\[ x_0:=\phi(0)+\sum_{k=0}^{\infty}\frac{hq(hk)}{e_p(hk+h)}\in\C \]
exists and is finite due to the definition of $e_p$ in \eqref{epdef}, and the assumption $|1+hp_0||1+hp_1|>1$. Clearly 
\[ x(t):=x_0e_p(t), \quad t\in\T \]
is a solution of \eqref{maineq}, and
\[ \lim_{t\rightarrow\infty}\frac{\phi(t)}{e_p(t)}=\phi(0)+\sum_{k=0}^{\infty}\frac{hq(hk)}{e_p(hk+h)} = x_0 \]
exists. Consequently, 
\[ x(t)=\left(\lim_{t\rightarrow\infty}\frac{\phi(t)}{e_p(t)}\right){e_p(t)}, \]
and
\begin{eqnarray*} 
 |\phi(t)-x(t)| &=& \left|-e_p(t)\sum_{k=\frac{t}{h}}^{\infty}\frac{hq(hk)}{e_p(hk+h)}\right| \\
  &\le& h \varepsilon |e_p(t)|\sum_{k=\frac{t}{h}}^{\infty}\frac{1}{|e_p(hk+h)|} \\
	&=& h \varepsilon \left(\frac{1}{|1+hp(t)|}+\frac{1}{|1+hp(t)||1+hp(t+h)|}+\cdots\right) \\
	&=& h \varepsilon \begin{cases} \frac{1+|1+hp_1|}{-1+|1+hp_0||1+hp_1|} &: p(t)=p_0 \\ \frac{1+|1+hp_0|}{-1+|1+hp_0||1+hp_1|} &: p(t)=p_1 \end{cases}
\end{eqnarray*}
holds for all $t\in\T$. Consequently, \eqref{maineq} has Ulam stability with Ulam constant $K_1$ given by \eqref{Kmax}.

We next show in case (i) that $x$ is the unique solution of \eqref{maineq} such that $|\phi(t)-x(t)| \le K_1\varepsilon$ for all $t\in\T$. Suppose $\phi:\T\rightarrow\C$ is an approximate solution of \eqref{maineq} such that
\[ |\Delta_h \phi(t)-p(t)\phi(t)| \le \varepsilon\; \text{ for all } \; t\in\T \] 
for some $\varepsilon>0$. Suppose further that $x_1,x_2:\T\rightarrow\C$ are two different solutions of \eqref{maineq} such that $|\phi(t)-x_j(t)|\le K_1\varepsilon$ for all $t\in\T$, for $j=1,2$. Then we have for constants $c_j\in\C$ that
\[ x_j(t) = c_je_p(t), \quad c_1\neq c_2,\]
and
\[ |e_p(t)||c_1-c_2| = |x_1(t)-x_2(t)| \le |x_1(t)-\phi(t)| + |\phi(t)-x_2(t)| \le 2K_1\varepsilon; \]
letting $t\rightarrow \infty$ yields $\infty<2K_1\varepsilon$, a contradiction. Consequently, $x$ is the unique solution of \eqref{maineq} such that $|\phi(t)-x(t)| \le K_1\varepsilon$ for all $t\in\T$.

Finally we show in case (i) that $K_1$ in \eqref{Kmax} is the minimum Ulam constant. In \eqref{phiq}, if $q(t)\equiv\varepsilon$ for $t\in\T$, then \eqref{voc} and \eqref{vocalt} imply the function $\phi:\T\rightarrow\C$ given by 
\[ \phi(t):=\left[\phi(0)+\sum_{k=0}^{\infty}\frac{h\varepsilon}{e_p(hk+h)}\right]e_p(t) - e_p(t)\sum_{k=\frac{t}{h}}^{\infty}\frac{h\varepsilon}{e_p(hk+h)} \]
satisfies the equality
\[ |\Delta_h\phi(t)-p(t)\phi(t)| = \varepsilon, \qquad t\in\T. \]
As 
\[ x(t):=\left[\phi(0)+\sum_{k=0}^{\infty}\frac{h\varepsilon}{e_p(hk+h)}\right]e_p(t) \]
is a solution of \eqref{maineq},
\begin{eqnarray*} 
 |\phi(t)-x(t)| &=& h \varepsilon |e_p(t)|\sum_{k=\frac{t}{h}}^{\infty}\frac{1}{|e_p(hk+h)|} = h \varepsilon \begin{cases} \frac{1+|1+hp_1|}{-1+|1+hp_0||1+hp_1|} &: p(t)=p_0 \\ \frac{1+|1+hp_0|}{-1+|1+hp_0||1+hp_1|} &: p(t)=p_1 \end{cases} \\
&\le& K_1\varepsilon
\end{eqnarray*}
holds for all $t\in\T$. As a result, all parts of (i) hold.

(ii): Now assume $0<|1+hp_0||1+hp_1|<1$. It is straightforward to check that $\phi$ takes the form
\[ \phi(t) = \phi(0)e_p(t) + e_p(t)\sum_{k=0}^{\frac{t-h}{h}}\frac{hq(kh)}{e_p(kh+h)} \]
by the variation of constants formula. Let $x$ be any solution of \eqref{maineq} with
\[ |\phi(0)-x(0)| < \varepsilon h\left(\frac{1+|1+hp_1|}{1-|1+hp_0||1+hp_1|}\right). \]
Then $x$ takes the form
\[ x(t)=x(0)e_p(t), \quad t\in\T, \]
and we have
\begin{eqnarray*} 
 \phi(t)-x(t) &=& e_p(t) \left(\phi(0)-x(0)\right)+ e_p(t) \sum_{k=0}^{\frac{t-h}{h}}\frac{hq(kh)}{e_p(kh+h)}.
\end{eqnarray*}
It follows that
\begin{eqnarray*}
 |\phi(t)-x(t)| &\le& |e_p(t)||\phi(0)-x(0)| + \varepsilon h|e_p(t)|\sum_{k=0}^{\frac{t-h}{h}}\frac{1}{|e_p(kh+h)|} \\
 &<& |e_p(t)|\varepsilon h\left(\frac{1+|1+hp_1|}{1-|1+hp_0||1+hp_1|}\right) + \varepsilon h|e_p(t)|\sum_{k=0}^{\frac{t-h}{h}}\frac{1}{|e_p(kh+h)|}.
\end{eqnarray*}
Now 
\[ |e_p(t)| = \begin{cases} |1+hp_0|^{n}|1+hp_1|^{n-1} &: t=(2n-1)h \\ |1+hp_0|^{n}|1+hp_1|^{n} &: t=2nh \end{cases} \]
and 
\begin{eqnarray*}
 |e_p(t)|\sum_{k=0}^{\frac{t-h}{h}}\frac{1}{|e_p(kh+h)|} 
 &=& \begin{cases} \left(\frac{1-|1+hp_0|^n|1+hp_1|^n+|1+hp_0|-|1+hp_0|^{n}|1+hp_1|^{n-1}}{1-|1+hp_0||1+hp_1|}\right) &: t=(2n-1)h \\  
 \left(\frac{1-|1+hp_0|^n|1+hp_1|^n+|1+hp_1|-|1+hp_0|^{n}|1+hp_1|^{n+1}}{1-|1+hp_0||1+hp_1|}\right) &: t=2nh. \end{cases}
\end{eqnarray*}
Piecing it all together,
\begin{eqnarray*}
 |\phi(t)-x(t)| &<& \varepsilon h  \begin{cases} \frac{|1+hp_0|^{n}|1+hp_1|^{n-1}+|1+hp_0|^{n}|1+hp_1|^{n}+1-|1+hp_0|^n|1+hp_1|^n+|1+hp_0|-|1+hp_0|^{n}|1+hp_1|^{n-1}}{1-|1+hp_0||1+hp_1|} \\ \frac{|1+hp_0|^{n}|1+hp_1|^{n}+|1+hp_0|^{n}|1+hp_1|^{n+1}+1-|1+hp_0|^n|1+hp_1|^n+|1+hp_1|-|1+hp_0|^{n}|1+hp_1|^{n+1}}{1-|1+hp_0||1+hp_1|} \end{cases} \\
&=& \varepsilon h  \begin{cases} \frac{1+|1+hp_0|}{1-|1+hp_0||1+hp_1|} &: \frac{t}{h}\equiv 1\mod 2 \\ \frac{1+|1+hp_1|}{1-|1+hp_0||1+hp_1|} &: \frac{t}{h}\equiv 0\mod 2 \end{cases} \\
&\le& \varepsilon h\max\left\{ \frac{1+|1+hp_0|}{1-|1+hp_0||1+hp_1|}, \frac{1+|1+hp_1|}{1-|1+hp_0||1+hp_1|}\right\}
\end{eqnarray*}
for all $t\in\T$. Thus, (ii) holds and the proof is complete.
\end{proof}

Using Theorem \ref{thm22}, we get the following result immediately.


\begin{theorem}\label{thm23}
Assume $p_0,p_1\in\C\backslash\{\frac{-1}{h}\}$ with $p_0\ne p_1$ and $0 < |1+hp_0||1+hp_1|\ne 1$. Then \eqref{maineq} has Ulam stability with Ulam stability constant 
\begin{equation}\label{Kmax0}
   K_0:=h\max\left\{\frac{1+|1+hp_0|}{|1-|1+hp_0||1+hp_1||}, \frac{1+|1+hp_1|}{|1-|1+hp_0||1+hp_1||}\right\}
\end{equation}
 on $\T$. Moreover, if $|1+hp_0||1+hp_1|>1$, then $K_0$ is the minimum Ulam stability constant for \eqref{maineq}. 
\end{theorem}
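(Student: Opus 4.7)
The plan is to deduce Theorem \ref{thm23} directly from Theorem \ref{thm22} by reconciling the absolute value in the denominator of $K_0$ with the two regimes treated there. First I would observe that when $|1+hp_0||1+hp_1|>1$ we have $|1-|1+hp_0||1+hp_1||=-1+|1+hp_0||1+hp_1|$, so the constant $K_0$ in \eqref{Kmax0} coincides with the constant $K_1$ in \eqref{Kmax}; when $0<|1+hp_0||1+hp_1|<1$ we have $|1-|1+hp_0||1+hp_1||=1-|1+hp_0||1+hp_1|$, so $K_0$ agrees with the bounding constant inside the $\max$ in Theorem \ref{thm22}(ii).

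In the regime $|1+hp_0||1+hp_1|>1$, both Ulam stability with constant $K_0=K_1$ and the assertion that $K_0$ is the minimum Ulam stability constant follow immediately from Theorem \ref{thm22}(i), so no additional work is needed in this case.

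In the regime $0<|1+hp_0||1+hp_1|<1$, given any $\phi:\T\rightarrow\C$ satisfying $|\Delta_h\phi(t)-p(t)\phi(t)|\le\varepsilon$ for all $t\in\T$, I would simply select $x$ to be the unique solution of \eqref{maineq} with initial value $x(0)=\phi(0)$. Then $|\phi(0)-x(0)|=0$ trivially satisfies the strict inequality required in Theorem \ref{thm22}(ii), and that theorem delivers $|\phi(t)-x(t)|<K_0\varepsilon$ for all $t\in\T$, which establishes Ulam stability on $\T$ with constant $K_0$. No minimality claim is made (or needed) in this subunit regime, so this completes the proof.

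Since Theorem \ref{thm23} is essentially a repackaging of Theorem \ref{thm22}, I do not foresee a substantive obstacle. The only care required is the correct matching of the absolute value $|1-|1+hp_0||1+hp_1||$ to the two sign cases and, in the subunit case, the choice of the particular solution with matching initial value so that the hypothesis of Theorem \ref{thm22}(ii) is verified for free.
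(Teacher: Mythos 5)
Your proposal is correct and matches the paper's approach: the paper derives Theorem \ref{thm23} ``immediately'' from Theorem \ref{thm22}, and your case analysis on the sign of $1-|1+hp_0||1+hp_1|$ together with the choice $x(0)=\phi(0)$ in the subunit case is exactly the routine verification being left implicit there.
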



\begin{remark}\label{remark2.3}
It is known that the constant coefficient $h$-difference equation
\[ \Delta_hx(t) - a x(t) = 0 \]
lacks Ulam stability on $\T$ when $a = 0$ or $-2/h$; see \cite[Remark 3.3]{andon} and \cite[Remark 1.1]{onitsuka2}. When $p(t)$ has infinitely many zeros or infinitely many points satisfying $p(t)=-2/h$, does \eqref{maineq} have Ulam stability on $\T$? Our result can give a positive answer to this question. Specifically, consider the functions
\[ p_0(t):=\begin{cases} p_0 \in \R\setminus \{0,-\frac{2}{h}\}:& \frac{t}{h}\equiv 0\mod 2, \\ p_1=0:& \frac{t}{h}\equiv 1\mod 2 \end{cases} \]
and
\[ p_{-\frac{2}{h}}(t):=\begin{cases} p_0 \in \R\setminus \{0,-\frac{2}{h}\}:& \frac{t}{h}\equiv 0\mod 2, \\ p_1=-\frac{2}{h}:& \frac{t}{h}\equiv 1\mod 2 \end{cases} \]
for $t\in\T$; that is, $p_0(t)$ has infinitely many zeros and $p_{-\frac{2}{h}}(t)$ has infinitely many points satisfying $p(t)=-2/h$. Clearly, we see that $p_0\ne p_1$ and $0\ne |1+hp_0||1+hp_1|\ne 1$ hold. The following results are obtained by Theorem \ref{thm23} and simple calculations. Our main equation with $p(t)=p_0(t)$ or $p_{-\frac{2}{h}}(t)$ has Ulam stability with Ulam constant $K_0$ given by \eqref{Kmax0}. In these cases, Ulam constants $K_0$ are represented in the same form
\[ K_0 = \begin{cases} \frac{h^2p_0}{2+hp_0}:& p_0<-\frac{2}{h}, \\
 \frac{2p_0}{2+hp_0}:& -\frac{2}{h}<p_0<-\frac{1}{h}, \\
 \frac{-2}{p_0}:& -\frac{1}{h}<p_0<0, \\
 \frac{2+hp_0}{p_0}:& 0<p_0. \\
 \end{cases}\]
Additionally, $\frac{h^2p_0}{2+hp_0}$ and $\frac{2+hp_0}{p_0}$ are the best (minimum) constants for Ulam stability when $p_0<-\frac{2}{h}$ and $0<p_0$, respectively.
\hfill$\diamondsuit$
\end{remark}


\begin{remark}\label{hilgercircle}
Extending Remark \ref{remark2.3}, given the two-cycle $p_0,p_1\in\C\backslash\{\frac{-1}{h}\}$ with $p_0\ne p_1$, and fixed step size $h>0$, the key quantity is $|1+hp_0| |1+hp_1|$. 
Let $\I_h$ be the Hilger imaginary circle \cite{hilger} or \cite[pages 51--53]{bp}. If both $p_0,p_1\in\I_h$, that is to say if there exist $\alpha_j,\beta_j\in\R$ with
\[ p_j=\alpha_j+i\beta_j, \qquad \left(\alpha_j+\frac{1}{h}\right)^2+\beta_j^2=\frac{1}{h^2}, \quad j\in\{0,1\}, \] 
then $|1+hp_0| |1+hp_1|=1$ and \eqref{maineq} is not Ulam stable by Remark \ref{remark2.1}. Now for $p_0,p_1\in\C\backslash\{\frac{-1}{h}\}$, consider the two-cycle coefficient function
\[ p(t):=\begin{cases} p_0 \in\C\setminus\I_h &: \frac{t}{h}\equiv 0\mod 2, \\ p_1\in\I_h &: \frac{t}{h}\equiv 1\mod 2 \end{cases} \]
for $t\in\T$; here $p$ lands on $\I_h$ infinitely often. Clearly $p_0\ne p_1$, $|1+hp_1|=1$, and $0 < |1+hp_0||1+hp_1|\ne 1$ hold. Again by Theorem \ref{thm23}, equation \eqref{maineq} has Ulam stability with Ulam constant $K_0$ given by \eqref{Kmax0}. In this case, the Ulam constant $K_0$ is
\[ K_0 = h\begin{cases} 
 \frac{2}{1-|1+hp_0|} &: 0<|1+hp_0|<1, \\
 \frac{|1+hp_0|+1}{|1+hp_0|-1} &: |1+hp_0|>1.
 \end{cases} \]
Additionally, $K_0=\frac{h|1+hp_0|+h}{|1+hp_0|-1}$ is the best (minimum) constant for Ulam stability when $|1+hp_0|>1$, that is, when $p_0$ is outside the Hilger imaginary circle and $p_1$ is on it.
\hfill$\diamondsuit$
\end{remark}


\section{perturbed linear equations}

In this section, we consider the first-order perturbed linear equation
\begin{equation}\label{perturbed}
 \Delta_h\phi(t) - p(t) \phi(t) = f(t,\phi(t)),
\end{equation}
where $p(t)$ is given in \eqref{two-cycle} and $f(t,\phi)$ is a complex-valued function on $\T \times \C$. We say that the solutions of \eqref{perturbed} are uniform-ultimately bounded for a bound $B$ if and only if there exists a constant $B>0$ with the following property:
\begin{quote}
For any $\alpha>0$, there exists a $T(\alpha)>0$ such that $|\phi_0|<\alpha$ with $\phi_0 \in \C$ imply that $|\phi(t)|<B$ for all $t\ge T(\alpha)$ with $t\in \T$, where $\phi(t)$ is a solution of \eqref{perturbed} satisfying $\phi(0)=\phi_0$. 
\end{quote}
The uniform-ultimate boundedness of the solutions has long been treated as an important problem in the field of ordinary differential equations and dynamical systems. For example, see \cite{Michel,Yoshi}. Using Theorem \ref{thm22}, we can obtain the following result.

\begin{corollary}\label{boundedness}
Let $\delta>0$ be an arbitrary constant. Suppose that there exists an $L>0$ such that $|f(t,\phi)|\le L$ for all $(t,\phi)\in \T \times \C$. Suppose also that all solutions of \eqref{perturbed} exist on $\T$. If $0<|1+hp_0||1+hp_1|<1$, then all solutions of \eqref{perturbed} are uniform-ultimately bounded for a bound $LK_0+\delta$, where $K_0$ is given in \eqref{Kmax0}.
\end{corollary}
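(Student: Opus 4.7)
The idea is that the hypothesis $|f(t,\phi)|\le L$ forces every solution $\phi$ of \eqref{perturbed} to be an approximate solution of the homogeneous equation \eqref{maineq} with error budget $\varepsilon = L$, so Theorem \ref{thm22}(ii) is directly applicable. Concretely, for a fixed $\phi_0\in\C$ and the corresponding solution $\phi$ of \eqref{perturbed} with $\phi(0)=\phi_0$, the function $x(t):=\phi_0\,e_p(t)$ is a solution of \eqref{maineq} with $x(0)=\phi_0$, so $|\phi(0)-x(0)|=0$ trivially meets the strict inequality required in part (ii) (assuming $L>0$; the $L=0$ case is immediate). Applying Theorem \ref{thm22}(ii) with $\varepsilon = L$ yields
\[
|\phi(t)-x(t)| < L\,h\max\!\left\{\tfrac{1+|1+hp_0|}{1-|1+hp_0||1+hp_1|},\tfrac{1+|1+hp_1|}{1-|1+hp_0||1+hp_1|}\right\} = L K_0,
\]
since in case (ii) the denominator $|1-|1+hp_0||1+hp_1||$ in \eqref{Kmax0} equals $1-|1+hp_0||1+hp_1|$.

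Next I would exploit the decay of $e_p$ under the standing hypothesis $r:=|1+hp_0||1+hp_1|<1$. Writing $M:=\max\{1,|1+hp_0|\}$, one checks directly from \eqref{epdef} that $|e_p(2nh)|=r^n$ and $|e_p((2n+1)h)|=r^n|1+hp_0|$, so
\[
|e_p(t)| \le M\, r^{\lfloor t/(2h)\rfloor}\quad\text{for all }t\in\T.
\]
Hence $|x(t)|\le \alpha M r^{\lfloor t/(2h)\rfloor}$ whenever $|\phi_0|<\alpha$.

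Given $\alpha>0$ and the prescribed $\delta>0$, choose
\[
T(\alpha) := 2h\left\lceil \frac{\log\bigl(\alpha M/\delta\bigr)_+}{\log(1/r)} + 1\right\rceil,
\]
so that $|x(t)|<\delta$ for all $t\ge T(\alpha)$ with $t\in\T$. Note $T(\alpha)$ depends only on $\alpha$ (and on the fixed data $p_0,p_1,h,\delta$), not on the particular solution. Finally the triangle inequality gives
\[
|\phi(t)|\le |\phi(t)-x(t)|+|x(t)| < LK_0 + \delta
\]
for all $t\ge T(\alpha)$, which is exactly uniform-ultimate boundedness for the bound $LK_0+\delta$.

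There is no real obstacle here: the work is packaged into Theorem \ref{thm22}(ii), and the only substantive point is to verify that the elementary geometric-decay bound on $|e_p(t)|$ produces a $T(\alpha)$ that is uniform in the choice of initial condition (and in the particular solution, should \eqref{perturbed} fail to have uniqueness). The mildest subtlety is the possibility $L=0$ (trivial) and the verification that the strict inequality in part (ii) holds at $t=0$, which is automatic since we take $x(0)=\phi_0$.
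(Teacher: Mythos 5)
Your proof is correct and follows essentially the same route as the paper: regard any solution of \eqref{perturbed} as an $\varepsilon=L$ approximate solution of \eqref{maineq}, invoke Theorem \ref{thm22}(ii) to get $|\phi(t)-x(t)|<LK_0$, and absorb the geometrically decaying term $|x(t)|$ into $\delta$ by choosing $T(\alpha)$ from the decay rate $|1+hp_0|\,|1+hp_1|<1$. Your choice $x(0)=\phi_0$ is a mild simplification over the paper's argument, which takes an arbitrary admissible $x$ and therefore carries the slightly larger factor $LK_0+\alpha$ (rather than $\alpha$) in front of $|e_p(t)|$; both choices lead to the same bound $LK_0+\delta$.
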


\begin{proof}
Set $B = LK_0+\delta$ for fixed $\delta>0$. Let $\phi(t)$ be the solution of \eqref{perturbed} with the initial condition $\phi(0)=\phi_0 \in \C$ with $|\phi_0|<\alpha$, where $\alpha$ is a fixed arbitrary constant. Since
\[ |\Delta_h\phi(t) - p(t) \phi(t)| = |f(t,\phi(t))| \le L \]
holds for all $t\in\T$, from Theorem \ref{thm22} (ii) we can find a solution $x$ of \eqref{maineq} with the initial condition
\[ |\phi_0-x(0)| < L h\left(\frac{1+|1+hp_1|}{1-|1+hp_0||1+hp_1|}\right) \le LK_0 \]
that satisfies
\[ |\phi(t)-x(t)| < LK_0 \]
for all $t\in\T$. This solution is written as $x(t) = x(0)e_p(t)$ on $\T$. Consequently, we have
\begin{eqnarray*}
  |\phi(t)| &\le& |\phi(t)-x(t)|+|x(t)| \\
  &<& LK_0+|x(t)| = LK_0+|x(0)e_p(t)| \\
  &\le& LK_0+(|x(0)-\phi_0|+|\phi_0|)|e_p(t)| \\
  &<& LK_0+(LK_0+\alpha)|e_p(t)|
\end{eqnarray*}
for all $t\in\T$. Note here that $0< |e_p(t)| \le \max\{1,|1+hp_0|\}$ holds for all $t\in\T$ by $0<|1+hp_0||1+hp_1|<1$. Hence, together with this and the above inequality, we obtain
\[ |\phi(t)| < LK_0+(LK_0+\alpha)\max\{1,|1+hp_0|\} \]
for all $t\in\T$. If $(LK_0+\alpha)\max\{1,|1+hp_0|\} \le \delta$ then $|\phi(t)| < B$ for all $t\in\T$; that is, $\phi(t)$ is uniform-ultimately bounded for a bound $B$. Next, we will consider the case $(LK_0+\alpha)\max\{1,|1+hp_0|\} > \delta$. Set
\[ T(\alpha) = h\left(2\log_{|1+hp_0||1+hp_1|}\frac{\delta}{(LK_0+\alpha)\max\{1,|1+hp_0|\}}+1\right). \]
If $t=(2n-1)h \ge T(\alpha)$ then
\begin{eqnarray*}
  |e_p(t)| &\le& \max\{1,|1+hp_0|\}(|1+hp_0||1+hp_1|)^{n-1} = \max\{1,|1+hp_0|\}(|1+hp_0||1+hp_1|)^{\frac{1}{2}\left(\frac{t}{h}-1\right)}\\
  &\le& \max\{1,|1+hp_0|\}(|1+hp_0||1+hp_1|)^{\frac{1}{2}\left(\frac{T(\alpha)}{h}-1\right)} = \frac{\delta}{LK_0+\alpha},
\end{eqnarray*}
and if $t=2nh \ge T(\alpha)$ then
\begin{eqnarray*}
  |e_p(t)| &\le& (|1+hp_0||1+hp_1|)^n = (|1+hp_0||1+hp_1|)^{\frac{t}{2h}}\\
  &\le& (|1+hp_0||1+hp_1|)^{\frac{T(\alpha)}{2h}}\\
  &<& \frac{\delta}{(LK_0+\alpha)\max\{1,|1+hp_0|\}} \le \frac{\delta}{LK_0+\alpha}.
\end{eqnarray*}
Consequently, we have
\[ |\phi(t)| < LK_0+\delta = B \]
for all $t\in\T$. Thus, the proof is now complete.
\end{proof}


\section{best constant for first-order equations with $n$-cycle coefficient}

In this section we consider on $\T$ the general extension of Section \ref{Section2} to arbitrary finite period, namely the Ulam stability of the first-order linear homogeneous difference equation with $n$-cycle (period $n$) coefficient
\begin{equation}\label{neq}
 \Delta_hx(t) - p(t) x(t) = 0, \qquad \Delta_h x(t):=\frac{x(t+h)-x(t)}{h}, 
\end{equation}
where $n\in\N$, $p:\T\rightarrow\C$ is given by
\begin{equation}\label{n-cycle}
 p(t):= p_k \quad\text{if}\quad \frac{t}{h}\equiv k\mod n
\end{equation}
for $k\in\{0,1,\ldots,n-1\}$, and $p_0,p_1,\ldots,p_{n-1}\in\C\backslash\{\frac{-1}{h}\}$ such that the coefficient function $p$ is periodic with period $n$, and $p$ is not periodic for any $k<n$.


\begin{remark}\label{remark4.1}
It is straightforward to check that $e_p$ in \eqref{epdef} satisfies \eqref{neq}, and $e_p(0)=1$.
Let $p_0,p_1,\ldots,p_{n-1}\in\C\backslash\{\frac{-1}{h}\}$. 
For convenience, note that
\[ |e_p(kh)|=|1+hp_0| |1+hp_1| \cdots |1+hp_{k-1}|, \quad k\in\{1,2,\ldots,n\}. \]
If $|e_p(nh)|=1$, then \eqref{neq} is not Ulam stable. To see this, let arbitrary $\varepsilon>0$ be given. For $e_p$ given above in \eqref{epdef}, let $\phi$ be defined by
\[ \phi(t):=\varepsilon \ell te_p(t), \qquad t\in\T, \]
where $\ell:=\min\left\{\frac{1}{|e_p(h)|}, \frac{1}{|e_p(2h)|}, \ldots, \frac{1}{|e_p((n-1)h)|}, \frac{1}{|e_p(nh)|}\right\}$.
Then $\phi$ satisfies the inequality
\[ |\Delta_h\phi(t)-p(t)\phi(t)| = \varepsilon \ell |e_p(t+h)| = \varepsilon \ell |e_p((k+1)h)|  \le \varepsilon \]
for $\frac{t}{h}\equiv k\mod n$ for all $t\in\T$. Since $x(t)=ce_p(t)$ is the general solution of \eqref{maineq}, then
\[ |\phi(t)-x(t)| = \left|\varepsilon\ell t-c\right||e_p(t)|\rightarrow\infty \]
as $t\rightarrow\infty$ for $t\in\T$ and for any $c\in\C$, since $e_p$ is bounded and bounded away from zero; see \cite[Theorem 3.10 (ii)]{andon}. In this case, \eqref{neq} lacks Ulam stability on $\T$. \hfill$\diamondsuit$
\end{remark}


\begin{remark}
Assume the coefficient function $p$ satisfies \eqref{n-cycle} for $p_0,p_1,\ldots,p_{n-1}\in\C\backslash\{\frac{-1}{h}\}$. Let 
\begin{equation}\label{sum0}
 S_0 =\frac{1}{|1+hp_0|} + \frac{1}{|1+hp_0||1+hp_{1}|} + \cdots + \frac{1}{|1+hp_0||1+hp_1|\cdots|1+hp_{n-1}|} 
\end{equation}
and
\begin{eqnarray}
	S_k &=&\frac{1}{|1+hp_k|} + \frac{1}{|1+hp_k||1+hp_{k+1}|} + \cdots \nonumber \\
	& &  + \frac{1}{|1+hp_k|\cdots|1+hp_{n-1}|} +  \frac{1}{|1+hp_k|\cdots|1+hp_{n-1}||1+hp_0|} + \cdots \nonumber \\
	& & + \frac{1}{|1+hp_k|\cdots|1+hp_{n-1}||1+hp_0|\cdots|1+hp_{k-1}|} \label{sumk}
\end{eqnarray}
for $k\in\{1,\ldots,n-1\}$. We will refer to these sums in the following theorem. \hfill$\diamondsuit$
\end{remark} 


\begin{theorem}\label{thm43}
Assume the coefficient function $p$ satisfies \eqref{n-cycle} for $p_0,p_1,\ldots,p_{n-1}\in\C\backslash\{\frac{-1}{h}\}$, with 
$0 < |e_p(nh)| \ne 1$. Let $\varepsilon>0$ be a fixed arbitrary constant, and let the function $\phi:\T\rightarrow\C$ satisfy the inequality
\[ |\Delta_h\phi(t)-p(t)\phi(t)|\le\varepsilon, \qquad t\in\T. \]
Then one of the following holds, where $e_p$ is given in \eqref{epdef}, and $S_0$, $S_k$ are given in \eqref{sum0}, \eqref{sumk}, respectively.
\begin{enumerate}
\item If $|e_p(nh)|>1$, then $\displaystyle\lim_{t\rightarrow\infty} \frac{\phi(t)}{e_p(t)}$ exists, and the function $x$ given by 
      \[ x(t):=\left(\lim_{t\rightarrow\infty} \frac{\phi(t)}{e_p(t)}\right)e_p(t) \] 
      is the unique solution of \eqref{neq} with 
			$$ |\phi(t)-x(t)| \le K_n\varepsilon $$ 
			for all $t\in\T$, where
			\begin{eqnarray}\label{nKmax}
        K_n:=\frac{h |e_p(nh)|}{-1+|e_p(nh)|}\max\left\{S_0, S_1, \ldots, S_{n-1}\right\}
      \end{eqnarray}
			is the minimum Ulam stability constant for \eqref{neq}, using \eqref{sum0} and \eqref{sumk}. 
\item If $0<|e_p(nh)|<1$, then any solution $x$ of \eqref{neq} with 
      $$|\phi(0)-x(0)| < \varepsilon h\left(\frac{|e_p(nh)|S_0}{1-|e_p(nh)|}\right)$$ satisfies 
      $$ |\phi(t)-x(t)| < \frac{\varepsilon h|e_p(nh)|}{1-|e_p(nh)|} \max\left\{S_0, S_1, \ldots, S_{n-1}\right\} $$ 
      for all $t\in\T$. 
\end{enumerate}
\end{theorem}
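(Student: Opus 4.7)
My plan is to mirror the proof of Theorem \ref{thm22}, with the main new ingredient being the correct identification of the cyclic sums $S_0,\ldots,S_{n-1}$ as partial sums of geometric series over $n$-blocks. As in \eqref{phiq}, I write $\Delta_h\phi(t)-p(t)\phi(t)=q(t)$ with $|q(t)|\le\varepsilon$, and apply the variation of constants formula to obtain
\[
\phi(t)=\phi(0)e_p(t)+e_p(t)\sum_{k=0}^{\frac{t-h}{h}}\frac{hq(kh)}{e_p((k+1)h)}.
\]
The periodicity of $p$ yields the crucial identity $|e_p((k+n)h)|=|e_p(nh)|\,|e_p(kh)|$, which will drive every geometric reduction that follows.

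For case (i), $|e_p(nh)|>1$ forces $\sum_k 1/|e_p((k+1)h)|$ to converge, so $x_0:=\phi(0)+\sum_{k=0}^\infty hq(kh)/e_p((k+1)h)$ is well-defined and $x(t):=x_0 e_p(t)$ solves \eqref{neq}. Rewriting as in \eqref{vocalt} gives
\[
|\phi(t)-x(t)|\le h\varepsilon|e_p(t)|\sum_{k=\frac{t}{h}}^{\infty}\frac{1}{|e_p((k+1)h)|}.
\]
The key computation is the following: if $t/h\equiv r\bmod n$, then multiplying the first $n$ terms of the tail (for $k=t/h,\ldots,t/h+n-1$) by $|e_p(t)|$ telescopes exactly to $S_r$ by cyclicity of $p$; each successive block of $n$ is scaled down by $1/|e_p(nh)|$, producing a geometric series whose sum is $\frac{|e_p(nh)|}{|e_p(nh)|-1}S_r$. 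Taking the maximum over $r\in\{0,1,\ldots,n-1\}$ yields the constant $K_n$ in \eqref{nKmax}. Uniqueness and minimality follow precisely as in Theorem \ref{thm22}: two distinct error-bounded solutions would differ by $ce_p(t)$ with $c\ne0$, contradicting the unbounded growth of $|e_p(t)|$; and plugging in $q\equiv\varepsilon$ turns each inequality into an equality attained at $t/h\equiv r^*\bmod n$ where $r^*$ realizes the maximum.

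For case (ii), $|e_p(nh)|<1$, I use the same variation of constants formula without passing to a limit and apply the triangle inequality to get $|\phi(t)-x(t)|\le|e_p(t)||\phi(0)-x(0)|+\varepsilon h|e_p(t)|\sum_{k=0}^{(t-h)/h}1/|e_p((k+1)h)|$. Reversing the order of summation via $j=t/h-1-k$ shows $|e_p(t)|/|e_p((k+1)h)|$ equals a cyclic product of at most $n$ consecutive factors $|1+hp_i|$ scaled by a power of $|e_p(nh)|$, so the finite sum again decomposes into $n$-blocks whose partial sums are controlled by the $S_{r'}$. Combined with the hypothesis on $|\phi(0)-x(0)|$ and the elementary bound on $|e_p(t)|$, this delivers the stated strict inequality. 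The main obstacle will be the bookkeeping in case (ii), specifically handling the partial $n$-block at the endpoint $t$ and identifying exactly which cyclic rotation $S_{r'}$ majorizes the partial sum; once the period-$n$ analogue of the two piecewise formulas in the proof of Theorem \ref{thm22} is pinned down, the rest is routine arithmetic.
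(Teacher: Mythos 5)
Your proposal follows essentially the same route as the paper's proof: variation of constants, the rewriting into a convergent tail for $|e_p(nh)|>1$, the block-of-$n$ geometric reduction of the tail to $\frac{|e_p(nh)|}{|e_p(nh)|-1}S_r$, uniqueness via the unboundedness of $|e_p(t)|$, minimality via $q\equiv\varepsilon$, and in case (ii) the triangle inequality plus explicit bookkeeping of the finite sum over $n$-blocks (which the paper carries out term by term for $t=(mn-n+1)h,\ldots,mnh$). The plan is correct and matches the paper's argument in all essentials.
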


\begin{proof}
Assume the coefficient function $p$ satisfies \eqref{n-cycle} such that $p_0,p_1,\ldots,p_{n-1}\in\C\backslash\{\frac{-1}{h}\}$, with $0 < |e_p(nh)|\ne 1$. Throughout this proof, as 
$|\Delta_h\phi(t)-p(t)\phi(t)|\le\varepsilon$ for all $t\in\T$, there exists a function $q:\T\rightarrow\C$ such that
\begin{equation}\label{nphiq}
 \Delta_h\phi(t)-p(t)\phi(t)=q(t), \quad |q(t)|\le \varepsilon
\end{equation}
for all $t\in\T$. 

(i): First we consider the case $ |e_p(nh)| > 1$. The variation of constants formula again yields \eqref{voc}.
This $\phi$ can be rewritten as \eqref{vocalt}. As in the proof of Theorem \ref{thm22},
\[ x(t)=\left(\lim_{t\rightarrow\infty}\frac{\phi(t)}{e_p(t)}\right){e_p(t)}, \]
and for $k\in\{1,2,\ldots,n-1\}$ we see that
\begin{eqnarray*} 
 |\phi(t)-x(t)|
  & = & \left|-e_p(t)\sum_{j=\frac{t}{h}}^{\infty}\frac{hq(hj)}{e_p(hj+h)}\right| \\
  &\le& h \varepsilon |e_p(t)|\sum_{j=\frac{t}{h}}^{\infty}\frac{1}{|e_p(hj+h)|}  \\
  & = & h \varepsilon \left(\frac{1}{|1+hp(t)|}+\frac{1}{|1+hp(t)||1+hp(t+h)|}+\cdots\right) \\
	& = & h\varepsilon\left(\sum_{j=0}^{\infty}\frac{1}{|e_p(nh)|^j}\right) \begin{cases} S_0 &: p(t)=p_0, \\ S_k &: p(t)=p_k \end{cases}\\
	& = & \frac{h\varepsilon}{1-\frac{1}{|e_p(nh)|}} \begin{cases} S_0 &: p(t)=p_0, \\ S_k &: p(t)=p_k \end{cases}
\end{eqnarray*}
holds for all $t\in\T$, where $S_0$ and $S_k$ are given in \eqref{sum0} and \eqref{sumk}, respectively. 
Consequently,  
$$ |\phi(t)-x(t)| \le \frac{h\varepsilon |e_p(nh)|}{-1+|e_p(nh)|}\max\{S_0,S_k\}, \quad k\in\{1,\ldots,n-1\}, $$
and \eqref{neq} has Ulam stability with Ulam constant $K_n$ given by \eqref{nKmax}.

We next show in case (i) that $x$ is the unique solution of \eqref{neq} such that $|\phi(t)-x(t)| \le K_n\varepsilon$ for all $t\in\T$. Suppose $\phi:\T\rightarrow\C$ is an approximate solution of \eqref{neq} such that
\[ |\Delta_h \phi(t)-p(t)\phi(t)| \le \varepsilon\; \text{ for all } \; t\in\T \] 
for some $\varepsilon>0$. Suppose further that $x_1,x_2:\T\rightarrow\C$ are two different solutions of \eqref{neq} such that $|\phi(t)-x_j(t)|\le K_n\varepsilon$ for all $t\in\T$, for $j=1,2$. Then we have for constants $c_j\in\C$ that
\[ x_j(t) = c_je_p(t), \quad c_1\neq c_2,\]
and
\[ |e_p(t)||c_1-c_2| = |x_1(t)-x_2(t)| \le |x_1(t)-\phi(t)| + |\phi(t)-x_2(t)| \le 2K_n\varepsilon; \]
letting $t\rightarrow \infty$ yields $\infty<2K_n\varepsilon$, a contradiction. Consequently, $x$ is the unique solution of \eqref{neq} such that $|\phi(t)-x(t)| \le K_n\varepsilon$ for all $t\in\T$.

Finally we show in case (i) that $K_n$ in \eqref{nKmax} is the minimum Ulam constant. In \eqref{nphiq}, if $q(t)\equiv\varepsilon$ for $t\in\T$, then \eqref{voc} and \eqref{vocalt} imply the function $\phi:\T\rightarrow\C$ given by 
\[ \phi(t):=\left[\phi(0)+\sum_{j=0}^{\infty}\frac{h\varepsilon}{e_p(hj+h)}\right]e_p(t) - e_p(t)\sum_{j=\frac{t}{h}}^{\infty}\frac{h\varepsilon}{e_p(hj+h)} \]
satisfies the equality
\[ |\Delta_h\phi(t)-p(t)\phi(t)| = \varepsilon, \qquad t\in\T. \]
As 
\[ x(t):=\left[\phi(0)+\sum_{j=0}^{\infty}\frac{h\varepsilon}{e_p(hj+h)}\right]e_p(t) \]
is a solution of \eqref{neq},
\begin{eqnarray*} 
 |\phi(t)-x(t)| &=& h \varepsilon |e_p(t)|\sum_{j=\frac{t}{h}}^{\infty}\frac{1}{|e_p(hj+h)|} = \frac{h\varepsilon}{1-\frac{1}{|e_p(nh)|}} \begin{cases} S_0 &: p(t)=p_0, \\ S_k &: p(t)=p_k \end{cases} \\
&\le& K_n\varepsilon
\end{eqnarray*}
holds for all $t\in\T$. As a result, all parts of (i) hold.

(ii): Now assume $0<|e_p(nh)|<1$. It is straightforward to check that $\phi$ takes the form
\[ \phi(t) = \phi(0)e_p(t) + e_p(t)\sum_{j=0}^{\frac{t-h}{h}}\frac{hq(jh)}{e_p(jh+h)} \]
by the variation of constants formula. Let $x$ be any solution of \eqref{maineq} with
\[ |\phi(0)-x(0)| <  \varepsilon h\left(\frac{|e_p(nh)|S_0}{1-|e_p(nh)|}\right), \]
where $S_0$ is as in \eqref{sum0}. Then $x$ takes the form
\[ x(t)=x(0)e_p(t), \quad t\in\T, \]
and we have
\begin{eqnarray*} 
 \phi(t)-x(t) &=& e_p(t) \left(\phi(0)-x(0)\right)+ e_p(t) \sum_{j=0}^{\frac{t-h}{h}}\frac{hq(jh)}{e_p(jh+h)}.
\end{eqnarray*}
It follows that
\begin{eqnarray*}
 |\phi(t)-x(t)| &\le& |e_p(t)||\phi(0)-x(0)| + \varepsilon h|e_p(t)|\sum_{j=0}^{\frac{t-h}{h}}\frac{1}{|e_p(jh+h)|} \\
 &<& \varepsilon h|e_p(t)|\left(\frac{|e_p(nh)|S_0}{1-|e_p(nh)|}\right) + \varepsilon h|e_p(t)|\sum_{j=0}^{\frac{t-h}{h}}\frac{1}{|e_p(jh+h)|}.
\end{eqnarray*}
Now 
\[ |e_p(t)| = \begin{cases} 
 |1+hp_0|^{m}|1+hp_1|^{m-1}\cdots|1+hp_{n-1}|^{m-1} &: t=(mn-n+1)h \\ 
 |1+hp_0|^{m}|1+hp_1|^{m}|1+hp_2|^{m-1}\cdots|1+hp_{n-1}|^{m-1} &: t=(mn-n+2)h \\
 \hspace{1in} \vdots & \hspace{0.5in} \vdots \\
 |1+hp_0|^{m}\cdots|1+hp_{n-3}|^{m}|1+hp_{n-2}|^{m-1}|1+hp_{n-1}|^{m-1} &: t=(mn-2)h \\
 |1+hp_0|^{m}\cdots|1+hp_{n-2}|^{m}|1+hp_{n-1}|^{m-1} &: t=(mn-1)h \\
 |1+hp_0|^{m}|1+hp_1|^{m}\cdots|1+hp_{n-2}|^{m}|1+hp_{n-1}|^{m} &: t=mnh.
\end{cases} \]
For $t=(mn-n+1)h$, we have
\begin{eqnarray*}
 |\phi(t)-x(t)|
 &<& \varepsilon h|e_p(t)|\left(\frac{|e_p(nh)|S_0}{1-|e_p(nh)|}\right) + \varepsilon h|e_p(t)|\sum_{j=0}^{\frac{t-h}{h}}\frac{1}{|e_p(jh+h)|}\\
 &=& \varepsilon h|e_p(t)|\left(\frac{|e_p(nh)|S_0}{1-|e_p(nh)|}\right) + \varepsilon h|e_p(t)|\left[\frac{|e_p(nh)|S_0}{1-|e_p(nh)|}\left(\frac{1}{|e_p(nh)|^{m-1}}-1\right)+\frac{1}{|e_p(t)|} \right]\\
 &=& \frac{\varepsilon h|e_p(nh)|S_1}{1-|e_p(nh)|},
\end{eqnarray*}
and for $t=(mn-n+2)h$, we have
\begin{eqnarray*}
 |\phi(t)-x(t)|
 &<& \varepsilon h|e_p(t)|\left(\frac{|e_p(nh)|S_0}{1-|e_p(nh)|}\right) + \varepsilon h|e_p(t)|\sum_{j=0}^{\frac{t-h}{h}}\frac{1}{|e_p(jh+h)|}\\
 &=& \varepsilon h|e_p(t)|\left(\frac{|e_p(nh)|S_0}{1-|e_p(nh)|}\right)\\
 & & + \varepsilon h|e_p(t)|\left[\frac{|e_p(nh)|S_0}{1-|e_p(nh)|}\left(\frac{1}{|e_p(nh)|^{m-1}}-1\right)+\frac{1}{|e_p(nh)|^{m-1}|1+hp_0|}+\frac{1}{|e_p(t)|} \right]\\
 &=& \frac{\varepsilon h|e_p(nh)|S_2}{1-|e_p(nh)|};
\end{eqnarray*}
this pattern continues until for $t=(mn-1)h$ we have
\[ |\phi(t)-x(t)| < \frac{\varepsilon h|e_p(nh)|S_{n-1}}{1-|e_p(nh)|} \]
and for $t=mnh$ we have
\[ |\phi(t)-x(t)| < \frac{\varepsilon h|e_p(nh)|S_{0}}{1-|e_p(nh)|}. \]
Putting it all together,
\begin{eqnarray*}
 |\phi(t)-x(t)|
 & < & \varepsilon h|e_p(t)|\left(\frac{|e_p(nh)|S_0}{1-|e_p(nh)|}\right) 
        + \varepsilon h|e_p(t)|\sum_{j=0}^{\frac{t-h}{h}}\frac{1}{|e_p(jh+h)|}\\
 &\le& \frac{\varepsilon h|e_p(nh)|}{1-|e_p(nh)|}\max\left\{S_0,S_1,\ldots,S_{n-1}\right\}
\end{eqnarray*}
for all $t\in\T$. Thus, (ii) holds and the proof is complete.
\end{proof}

Using Theorem \ref{thm43}, we get the following result immediately.


\begin{theorem}\label{thm44}
Assume the coefficient function $p$ satisfies \eqref{n-cycle} for $p_0,p_1,\ldots,p_{n-1}\in\C\backslash\{\frac{-1}{h}\}$, with 
$0 < |e_p(nh)| \ne 1$. Let $S_0$ and $S_k$ for $k\in\{1,2,\ldots,n-1\}$ be given by \eqref{sum0} and \eqref{sumk}, respectively. 
Then \eqref{neq} has Ulam stability with Ulam stability constant 
\begin{equation}\label{nKmax0}
   K_0:=\frac{h|e_p(nh)|}{\left|1-|e_p(nh)|\right|}\max\left\{S_0,S_1,\ldots,S_{n-1}\right\}
\end{equation}
 on $\T$. Moreover, if $|e_p(nh)|>1$, then $K_0$ is the minimum Ulam stability constant for \eqref{maineq}. 
\end{theorem}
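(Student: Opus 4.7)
The plan is to read Theorem \ref{thm44} as a routine corollary of Theorem \ref{thm43}, splitting on the sign of $1-|e_p(nh)|$. The formula \eqref{nKmax0} is arranged so that the absolute value $\bigl|1-|e_p(nh)|\bigr|$ absorbs both cases of Theorem \ref{thm43} into one expression; beyond this bookkeeping there is essentially nothing to prove.

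First, in the regime $|e_p(nh)|>1$ I would verify that $\bigl|1-|e_p(nh)|\bigr| = -1+|e_p(nh)|$, so $K_0$ in \eqref{nKmax0} coincides exactly with $K_n$ in \eqref{nKmax}. Theorem \ref{thm43}(i) then directly produces, for each approximate solution $\phi$ of \eqref{neq} satisfying $|\Delta_h\phi-p\phi|\le\varepsilon$, a unique solution $x$ of \eqref{neq} with $|\phi(t)-x(t)|\le K_n\varepsilon = K_0\varepsilon$ for all $t\in\T$, and furthermore declares this constant minimal. Both conclusions of Theorem \ref{thm44} in this regime are thus immediate.

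In the regime $0<|e_p(nh)|<1$, Theorem \ref{thm43}(ii) does not on its own manufacture a solution; one must choose a solution of \eqref{neq} whose initial value satisfies the strict inequality $|\phi(0)-x(0)| < \varepsilon h|e_p(nh)|S_0/(1-|e_p(nh)|)$. I would select $x$ to be the unique solution of \eqref{neq} with $x(0)=\phi(0)$, so that the left-hand side is $0$, which is strictly less than the right-hand side (positive since $h,\varepsilon,S_0>0$ and $0<|e_p(nh)|<1$). Theorem \ref{thm43}(ii) then yields
\[
|\phi(t)-x(t)| < \frac{\varepsilon h|e_p(nh)|}{1-|e_p(nh)|}\max\{S_0,S_1,\ldots,S_{n-1}\} = K_0\varepsilon
\]
for all $t\in\T$, which is Ulam stability with constant $K_0$. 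Consistent with the theorem statement, no minimality claim is made in this subcritical regime.

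The only genuine obstacle is the notational one flagged above: checking that $\bigl|1-|e_p(nh)|\bigr|$ in \eqref{nKmax0} really does unify the two branches of Theorem \ref{thm43}, and recognizing that $x(0)=\phi(0)$ is an admissible initial condition in part (ii). No new variation-of-constants computation, no new estimate on $e_p$, and no new uniqueness argument is required — the work has already been done in Theorem \ref{thm43}.
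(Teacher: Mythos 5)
Your proposal is correct and matches the paper's route exactly: the paper derives Theorem \ref{thm44} "immediately" from Theorem \ref{thm43}, with case (i) giving $K_0=K_n$ and minimality, and case (ii) handled by choosing the solution with $x(0)=\phi(0)$ so the initial-condition hypothesis holds trivially. Nothing further is needed.
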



\begin{remark}\label{remark4.5}
Let $n=3$ and $p_0\not=\frac{-1}{h}$, and consider the 3-cycle
\[ p(t):=\begin{cases} p_0 \in \C\setminus\I_h &: \frac{t}{h}\equiv 0\mod 3 \\ p_1=0 &: \frac{t}{h}\equiv 1\mod 3 \\
 p_2=0 &: \frac{t}{h}\equiv 2\mod 3 \end{cases} \]
for $t\in\T$; that is, $p$ has infinitely many zeros. Clearly $p$ is a 3-cycle, and 
$$ 0\ne |e_p(3h)|=|1+hp_0||1+hp_1||1+hp_2|=|1+hp_0|\ne 1 $$ 
holds. The following result is obtained by Theorem \ref{thm43} and simple calculations. Our main equation with this $p$ has Ulam stability with Ulam constant $K_0$ given by \eqref{nKmax0}. Note that
$$ S_0=\frac{3}{|1+hp_0|}, \qquad S_1=2+\frac{1}{|1+hp_0|}, \qquad S_2=1+\frac{2}{|1+hp_0|}. $$
In this case, the Ulam constant $K_0$ is 
\[ K_0 = 
 \begin{cases} 
  \frac{h|1+hp_0|S_1}{|1+hp_0|-1} &: 1<|1+hp_0|, \\
  \frac{h|1+hp_0|S_0}{1-|1+hp_0|} &: 0<|1+hp_0|<1.
 \end{cases}\]
Moreover, $\frac{h|1+hp_0|S_1}{|1+hp_0|-1}$ is the best (minimum) constant for Ulam stability when $|1+hp_0|>1$. See also Remarks \ref{remark2.3} and \ref{hilgercircle}. \hfill$\diamondsuit$
\end{remark}

\section*{Acknowledgements}
The second author was supported by JSPS KAKENHI Grant Number JP17K14226.


\end{document}